\date{\today}
\newcommand{\supess}{\operatorname{ess\, sup}}
\newtheorem{thm}{Theorem}[section]
\newtheorem{prop}[thm]{Proposition}
\newtheorem{lem}[thm]{Lemma}
\theoremstyle{definition}
\numberwithin{equation}{section}
\author[\'O. Ciaurri]{\'Oscar Ciaurri}
\address{Departamento de Matem\'aticas y Computaci\'on\\
         Universidad de La Rioja\\
         26004 Logro\~no, Spain}
\email{oscar.ciaurri@unirioja.es}
\thanks{Research supported by grant PID2021-124332NB-C22 AEI, from Spanish Government}
\keywords{Hankel transform, vector-valued
inequalities, weighted inequality, mixed-norm spaces}
\subjclass[2010]{Primary: 42C10, 42B15. Secondary: 42B35, 44A20, 33C05.}
\begin{document}

\title[Hankel transform transplantation]{Uniform weighted inequalities\\
for the  Hankel transform transplantation operator}

\begin{abstract}
In this paper we present uniform weighted inequalities for the Hankel transform transplantation operator. A weighted vector-valued inequality is also obtained. As a consequence we deduce an extension of a transference theorem due to Rubio de Francia.
\end{abstract}

\maketitle

\section{Introduction and main results}
Let $J_{\alpha}$ be the Bessel function of order $\alpha$. For appropriate functions defined on $(0,\infty)$, we define the Hankel transform of order $\alpha>-1$ as the integral operator given by
\[
\mathcal{H}_\alpha f(x)=\int_0^\infty f(y) J_{\alpha}(xy)(xy)^{1/2}\, dy, \qquad x>0.
\]
It is known that $\mathcal{H}_\alpha\circ \mathcal{H}_\alpha f=f$ and $\|\mathcal{H}_\alpha f\|_{L^2}=\|f\|_{L^2}$,  for any $f\in C_c^\infty(0,\infty)$.

The transplantation operator for the Hankel transform is defined as
\[
T_{\alpha}^{\beta}=\mathcal{H}_{\beta}\circ \mathcal{H}_\alpha, \qquad \alpha\not= \beta.
\]
The main target of this paper is the analysis of uniform weighted inequalities for this operator with weights in the Muckenhoupt class $A_p(0,\infty)$, see the next section for its precise definition.

In particular, we focus on the case $\alpha=a+k$ and $\beta=b+k$, with $a\not=b$, $a,b\ge -1/2$, and $k=0,1,2,\dots$. Considering the operator  $S_k^{a,b}=T_{a+k}^{b+k}$ and the spaces
\[
L^p(u)=\left\{f: \|f\|_{L^p(u)}<\infty\right\},
\]
where
\[
\|f\|_{L^p(u)}:=\left(\int_0^\infty |f(x)|^p u(x) \, dx\right)^{1/p},
\]
we prove the following result.
\begin{thm}
\label{thm:main1}
Let $a\not =b$, $a,b> -1/2$, $k=0,1,2\dots$, $1<p<\infty$, and $u\in A_p(0,\infty)$. Then, 
\[
\|S_k^{a,b} f\|_{L^p(u)}\le C\|f\|_{L^p(u)},
\] 
where the constant $C$ depend on $a$ and $b$ but not on $k$.
\end{thm}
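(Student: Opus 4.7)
The plan is to realize $S_k^{a,b}$ as an integral operator and decompose its kernel into a principal part modeled on the Hilbert transform together with a remainder that is controlled by a Hardy-type operator. Weighted $A_p$ theory for these two models will then yield the bound, and the entire argument has to be carried out with explicit tracking of the dependence on $k$.

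First I would derive a workable expression for the kernel
\[
K_k^{a,b}(x,y)=\int_0^\infty J_{b+k}(xt)\,J_{a+k}(yt)\,(xy)^{1/2}\, t\, dt,
\]
either from the Weber--Schafheitlin discontinuous integral or by inserting standard integral representations for the two Bessel factors and simplifying. The purpose of this step is to isolate, as precisely as possible, the singular behaviour of $K_k^{a,b}$ on the diagonal $x=y$, since it is this singularity that will eventually be identified with a Hilbert transform.

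Next I would split $K_k^{a,b}=K_k^{\mathrm{loc}}+K_k^{\mathrm{glob}}$, with $K_k^{\mathrm{loc}}$ supported in a region of the form $\{1/2<y/x<2\}$ and $K_k^{\mathrm{glob}}$ in its complement. For $K_k^{\mathrm{glob}}$ I would combine the classical size estimates $|J_\nu(z)|\le C\min\{z^\nu,z^{-1/2}\}$, with constants uniform in $\nu\ge 0$, and careful bookkeeping of powers, to obtain an off-diagonal bound
\[
|K_k^{\mathrm{glob}}(x,y)|\le \frac{C}{\max(x,y)}\min\!\left\{\Bigl(\tfrac{x}{y}\Bigr)^{\mu},\Bigl(\tfrac{y}{x}\Bigr)^{\mu}\right\},
\]
with $C$ and $\mu>0$ depending only on $a,b$; a kernel of this form is pointwise dominated by the Hardy--Littlewood maximal function, and standard $A_p$ theory supplies the desired weighted estimate. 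For $K_k^{\mathrm{loc}}$ I would insert the oscillatory-regime asymptotic expansion of $J_{a+k}(z)$ and $J_{b+k}(z)$ and, near the transition zone $z\sim a+k$, the Olver-type uniform Airy expansion. After evaluating the resulting oscillatory integral in $t$ by stationary phase, the principal contribution can be written in the form
\[
K_k^{\mathrm{loc}}(x,y)=\frac{c(a,b)}{x-y}\,\chi(x,y)+R_k(x,y),
\]
where $c(a,b)$ depends only on $a,b$ and $R_k$ satisfies a Hardy-type size bound uniform in $k$. The principal term is then controlled on $L^p(u)$ by the Hunt--Muckenhoupt--Wheeden theorem for the Hilbert transform on $(0,\infty)$, and $R_k$ by the same $A_p$ machinery used for $K_k^{\mathrm{glob}}$.

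The main obstacle is precisely the uniformity in $k$. The classical Hankel asymptotic of $J_{a+k}$ is not uniform in the parameter when $z$ is comparable to $a+k$, and it is in this transition window that the bulk of the mass of $J_{a+k}$ is concentrated; controlling the contribution of this zone to $K_k^{\mathrm{loc}}$ by means of the Olver uniform expansion and verifying that the extracted residue $c(a,b)$ truly does not depend on $k$ is the technically heaviest step of the proof. Once that is achieved, the remainder of the argument is standard weighted harmonic analysis on the half-line.
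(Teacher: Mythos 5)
Your overall architecture (reduce to a standard kernel near the diagonal plus a maximal-function-dominated part off the diagonal) is in the right spirit, but two of your concrete steps fail, and they fail precisely on the point the theorem is about, namely uniformity in $k$. First, the bound $|J_\nu(z)|\le C\min\{z^\nu,z^{-1/2}\}$ with $C$ independent of $\nu$ is false: in the transition region $z\sim\nu$ one has $J_\nu(\nu)\sim c\,\nu^{-1/3}$, so $\sup_z z^{1/2}|J_\nu(z)|\gtrsim \nu^{1/6}\to\infty$; the uniform Landau bound is only $|J_\nu(z)|\le Cz^{-1/3}$. Consequently the constants in your global estimate blow up with $k$. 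Worse, the integral $\int_0^\infty J_{b+k}(xt)J_{a+k}(yt)(xy)^{1/2}t\,dt$ is not absolutely convergent (the integrand is only $O(t^{-1})\cdot t$ at infinity even with the sharpest pointwise bounds), so no argument based on size estimates of the Bessel factors alone can produce your off-diagonal bound for $K_k^{\mathrm{glob}}$; you must exploit cancellation, or evaluate the integral in closed form. Second, for $K_k^{\mathrm{loc}}$ you defer the entire transition-zone analysis via Olver's uniform Airy expansion to a step you describe but do not carry out; since that is exactly where the $k$-dependence could enter, the proposal as written does not establish the theorem.

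The fix is to follow your own first step to its conclusion: the Weber--Schafheitlin evaluation gives the kernel in closed form as a ${}_2F_1$ (this is Schindler's formula, which the paper quotes), namely a constant times $x^{-(\alpha+3/2)}y^{\alpha+1/2}\,{}_2F_1\bigl(\tfrac{\alpha+\beta+2}{2},\tfrac{\alpha-\beta+2}{2};\alpha+1;(y/x)^2\bigr)$ for $y<x$ (and symmetrically for $y>x$), plus a multiple of the identity. The paper then inserts Euler's integral representation of ${}_2F_1$ and proves an elementary beta-type inequality (its Lemma \ref{lem:estimate}) for $\int_0^1 s^\gamma(1-s)^{d+c-1/2}(A-Bs)^{-(d+c+\lambda+1/2)}ds$ with explicit dependence on the large parameter $d$, which yields the uniform standard-kernel estimates $|K|\le C/|x-y|$ and $|\nabla K|\le C/(x-y)^2$ with $C=C(a,b)$, valid when $0<|a-b|\le 1$; the general case follows by composing transplantations in steps of length at most one, and the weighted bound then comes from Calder\'on--Zygmund theory together with the uniform $L^2$ isometry property of $T_\alpha^\beta$. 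This route avoids Bessel asymptotics, stationary phase, and the Airy transition zone entirely, which is why the uniformity in $k$ comes out cleanly.
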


The boundedness of the transplantation operator for the Hankel transform $T_\alpha^\beta$ in $L^p$ spaces with powers weights was analyzed in \cite{Guy}, \cite{Schin} and \cite{Stem}. More general weights, including weights in the $A_p(0,\infty)$  class for $\alpha,\beta\ge -1/2$, were introduced in \cite{NoSt-Toh}. A result for the transplantation operator on Hardy spaces was proved in \cite{Kanjin}.

The Hankel transform appears in a natural way in harmonic analysis. In fact, for $f(x)=g(|x|)$, with $x\in \mathbb{R}^n$, it is verified that its Fourier transform is also a radial function and it is given by
\[
\hat f(\xi)=|\xi|^{-(n-1)/2}\mathcal{H}_{(n-2)/2}(g(\cdot) y^{(n-1)/2})(|\xi|).
\]
The action of the Fourier transform of radial functions multiplied by spherical harmonics is related to the Hankel transform also, see the last section. From this relation we can prove a generalization of a transference result due to Rubio de Francia. Indeed, in \cite[Theorem 2.2]{Rubio} it was proved that the inequality
\[
\|T_mf\|_{L^2(\mathbb{R}^n, u(|\cdot|))}\le C \|f\|_{L^2(\mathbb{R}^n,u(|\cdot|))}
\]
implies 
\[
\|T_mf\|_{L^2(\mathbb{R}^{n+2j},u(|\cdot|))}\le C \|f\|_{L^2(\mathbb{R}^{n+2j},u(|\cdot|))}, \qquad j=1,2,\dots,
\]
where $u$ is a nonnegative measurable function on $(0,\infty)$ and $T_m$ is the multiplier for the Fourier transform given by
\[
\widehat{T_m f}(\xi)=m(|\xi|)\hat{f}(\xi),
\]
with $m$ being a bounded function on $(0,\infty)$. This result allow us to deduce the boundedness of $T_m$ from the low dimensional results with jumps of length two. Our generalization of the transference theorem avoids such jumps but we have to include the restriction $u\in A_2(0,\infty)$.

\begin{thm}
\label{thm:JL}
Let $n\ge 2$ and $u\in A_2(0,\infty)$. Then the inequality
\[
\|T_m f\|_{L^2(\mathbb{R}^n, u(|\cdot|))}\le C \|f\|_{L^2(\mathbb{R}^n, u(|\cdot|))} 
\]
implies
\[
\|T_m f\|_{L^2(\mathbb{R}^{n+d}, u(|\cdot|))}\le C \|f\|_{L^2(\mathbb{R}^{n+d}, u(|\cdot|))}, \qquad d=1,2,\dots. 
\]
\end{thm}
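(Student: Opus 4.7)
The plan is to decompose $L^2(\R^N,u(|\cdot|))$ into spherical-harmonic subspaces, so that the radial multiplier $T_m$ splits into a sequence of Hankel multipliers of varying orders, and then to use Theorem~\ref{thm:main1} as the intertwining tool that transfers bounds between those orders. The role of the hypothesis $u\in A_2(0,\infty)$ is precisely to activate Theorem~\ref{thm:main1}, which provides the uniform control needed to shift the Hankel order by any fixed positive amount (in particular by half-integers, which is what the $d=1$ case requires).

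In detail, I would first recall that if $\{Y_{l,j}\}_{j=1}^{d_l^N}$ is an orthonormal basis of spherical harmonics of degree $l$ on $S^{N-1}$, every $f\in L^2(\R^N,u(|\cdot|))$ admits the expansion $f(x)=\sum_{l\ge 0}\sum_{j}f_{l,j}(|x|)\,Y_{l,j}(x/|x|)$, and the Fourier transform preserves the spherical-harmonic type. After the substitution $F_{l,j}(r):=f_{l,j}(r)\,r^{(N-1)/2}$, which turns the radial measure $u(r)r^{N-1}\,dr$ into $u(r)\,dr$, the classical Bochner--Hecke formula translates the action of $T_m$ on the subspace spanned by $\{g(|x|)Y_{l,j}(x/|x|)\}$ into the one-dimensional operator
\[
\mathcal{M}_{\alpha_{l,N}}:=\mathcal{H}_{\alpha_{l,N}}\,M_m\,\mathcal{H}_{\alpha_{l,N}}
\]
on $L^2((0,\infty),u)$, where $\alpha_{l,N}=(N-2)/2+l$ and $M_m$ denotes multiplication by $m$. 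Therefore the hypothesis is equivalent (by testing on each block separately and using orthogonality of spherical harmonics) to the uniform bound $\sup_{l\ge 0}\|\mathcal{M}_{\alpha_{l,n}}\|_{L^2(u)\to L^2(u)}<\infty$, and the conclusion is equivalent to the analogous bound with $n$ replaced by $n+d$.

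The key observation is an intertwining identity. Since $\mathcal{H}_\alpha^2=I$ and $T_\alpha^\beta=\mathcal{H}_\beta\mathcal{H}_\alpha$, an immediate cancellation gives
\[
T_\alpha^\beta\,\mathcal{M}_\alpha\,T_\beta^\alpha=\mathcal{H}_\beta M_m\mathcal{H}_\beta=\mathcal{M}_\beta.
\]
Setting $a=(n-2)/2$ and $b=(n+d-2)/2$, we have $a,b>-1/2$ (since $n\ge 2$ and $d\ge 1$) and $a\ne b$; applying Theorem~\ref{thm:main1} to the operators $S_l^{a,b}=T_{a+l}^{b+l}$ and $S_l^{b,a}=T_{b+l}^{a+l}$ with $p=2$ and the weight $u\in A_2(0,\infty)$ yields
\[
\|\mathcal{M}_{b+l}F\|_{L^2(u)}\le C\,\|\mathcal{M}_{a+l}\|_{L^2(u)\to L^2(u)}\,\|F\|_{L^2(u)},
\]
with $C$ independent of $l$. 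Combined with the uniform bound for $\{\mathcal{M}_{a+l}\}_{l\ge 0}$ coming from the hypothesis, this produces the uniform bound for $\{\mathcal{M}_{b+l}\}_{l\ge 0}$ that was needed.

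Reassembling, I would sum the one-dimensional estimates over all spherical-harmonic indices and reverse the substitution $F_{l,j}=f_{l,j}r^{(n+d-1)/2}$ to recover the desired inequality on $L^2(\R^{n+d},u(|\cdot|))$. The main obstacle I expect is the uniformity in $l$: the boundedness of $T_m$ on $\R^n$ gives a single operator bound, and the whole argument rests on the fact that the transplantation constant in Theorem~\ref{thm:main1} does \emph{not} depend on the shift $k$, which is exactly what allows the uniform bound in dimension $n$ to be promoted to a uniform bound in dimension $n+d$ and explains why the weight restriction $u\in A_2(0,\infty)$ (rather than a more classical condition) is the right hypothesis.
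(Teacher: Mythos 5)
Your argument is correct and is essentially the paper's own proof: the paper also expands into spherical harmonics, identifies the action of $T_m$ on each block with the Hankel multiplier $\mathcal{H}_{k+(N-2)/2}M_m\mathcal{H}_{k+(N-2)/2}$ after the substitution $F_{k,j}=f_{k,j}r^{(N-1)/2}$, conjugates by the transplantation operators $S_k^{(n-2)/2,(n+d-2)/2}$, and invokes the $k$-uniformity of Theorem \ref{thm:main1} at $p=2$. The only difference is presentational: the paper routes through a general mixed-norm statement (Theorem \ref{thm:mixed-norm}) and a vector-valued inequality proved by extrapolation for $p\ne 2$, all of which collapses at $p=2$ to exactly the orthogonality argument you give.
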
  

\section{Proof of Theorem \ref{thm:main1}}
For $1<p<\infty$, a nonnegative and locally integrable function $u$ belongs to the $A_p(0,\infty)$ class when for any interval $(a,b)\subset (0,\infty)$ it is verified that
\[
\left(\frac{1}{b-a}\int_{a}^{b}u(x)\, dx\right)\left(\frac{1}{b-a}\int_{a}^{b}u(x)^{-q/p}\, dx\right)^{p/q}<\infty,
\]
where $q$ is the conjugate of $p$; i. e., $p^{-1}+q^{-1}=1$. The definition of $A_1(0,\infty)$ weights is given in terms of the maximal function but we will not use them.

It is well known that for a Calder\'on-Zygmund operator $T$ bounded from $L^2$ into itself, the inequality 
\[
\|Tf\|_{L^p(u)}\le C \|f\|_{L^p(u)}, \qquad f\in L^2\cap L^p(u),
\]
holds for $u\in A^p(0,\infty)$ and $1<p<\infty$. As an obvious consequence, the operator $T$ extends to a bounded operator to $L^p(u)$.

In \cite[Remark 5,2]{NoSt-Toh} was shown that $T_\alpha^\beta$ is bounded from $L^p(u)$ into itself for $\alpha,\beta\ge -1/2$ and $u\in A_p(0,\infty)$, so we can focus on the analysis of $S_k^{a,b}$ for $k\ge k_0$, where $k_0$ is a nonnegative integer (for example, we can suppose $k_0\ge 10$). Showing that $S_k^{a,b}$ is a Calder\'on-Zygmund operator with an uniform standard kernel the proof of Theorem \ref{thm:main1} will be completed.

Schindler \cite{Schin} showed that for $\alpha,\beta\ge -1/2$ the operator $T_\alpha^\beta$ can be written as
\[
T_\alpha^\beta f(x)=\text{P. V.}\int_{0}^{\infty} K_{\alpha}^\beta(x,y) f(y)\, dy+\cos\left((\beta-\alpha)\frac{\pi}{2}\right)f(x),
\]
where the kernel $K_\alpha^\beta$ is given by
\[
\frac{2\Gamma((\alpha+\beta+2)/2)}{\Gamma(\alpha+1)\,\Gamma((\beta-\alpha)/2)}x^{-(\alpha+3/2)}y^{\alpha+1/2}
{}_2F_1\left(\frac{\alpha+\beta+2}{2},\frac{\alpha-\beta+2}{2};\alpha+1;\left(\frac{y}{x}\right)^2\right)
\]
for $0<y<x$, and
\[
\frac{2\Gamma((\alpha+\beta+2)/2)}{\Gamma(\beta+1)\,\Gamma((\alpha-\beta)/2)}x^{\beta+1/2}y^{-(\beta+3/2)}
{}_2F_1\left(\frac{\alpha+\beta+2}{2},\frac{\beta-\alpha+2}{2};\beta+1;\left(\frac{x}{y}\right)^2\right)
\]
for $0<x<y$.

In \cite{NoSt-Toh} it is proved that the expression for the transplantation operator can be extended to the range $\alpha,\beta >-1$, but we will consider $\alpha,\beta\ge-1/2$ only. Moreover, in such paper \cite[Proposition 3.1]{NoSt-Toh} it is showed that for $\alpha\not=\beta$
\[
\langle \mathcal{H}_\beta\circ \mathcal{H}_\alpha f, g\rangle =\int_{0}^{\infty}K_{\alpha}^\beta(x,y)f(y)\overline{g(x)}\, dx\, dy,
\]
for $f,g\in C_c^\infty(0,\infty)$ with disjoint supports. Using that $T_\alpha^\beta$ is bounded from $L^2$ itself, with the estimates in the next proposition, we will deduce that $S_k^{a,b}$ is an uniform Calder\'on-Zygmund operator for some values of the parameters $a$ and $b$.

\begin{prop}
\label{prop:C-Z}
Let $a,b\ge -1/2$, such that $0<|a-b|\le 1$, and $k\ge k_0$, where $k_0$ is a nonnegative integer. Then, for $x,y>0$ and $x\not= y$,
\begin{equation}
\label{eq:T1-size}
|K_{a+k}^{b+k}(x,y)|\le \frac{C_1}{|x-y|}
\end{equation}
and
\begin{equation}
\label{eq:T1-smooth}
\left|\frac{\partial}{\partial x} K_{a+k}^{b+k}(x,y)\right|+\left|\frac{\partial}{\partial y} K_{a+k}^{b+k}(x,y)\right|\le\frac{C_2}{(x-y)^2},
\end{equation}
where the constants $C_1$ and $C_2$ depend on $a$ and $b$ but not on $k$.
\end{prop}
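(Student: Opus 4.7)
The plan is to isolate the singular behavior of $K_{a+k}^{b+k}$ by Euler's transformation and then control the residual hypergeometric factor uniformly in $k$ via its Euler integral representation. By the symmetry $K_\alpha^\beta(x,y) = K_\beta^\alpha(y,x)$ visible in the two-case formula for the kernel, it suffices to treat $0 < y < x$; set $t = y/x \in (0,1)$. The hypergeometric parameters $A_0 = k+1+(a+b)/2$, $B_0 = 1+(a-b)/2$, $C_0 = k+a+1$ satisfy $C_0 - A_0 - B_0 = -1$, so Euler's transformation yields
\[
{}_2F_1(A_0, B_0; C_0; z) = (1-z)^{-1}\, F_k(z), \qquad F_k(z) := {}_2F_1\!\left(\tfrac{a-b}{2},\, k+\tfrac{a+b}{2};\, k+a+1;\, z\right),
\]
so that
\[
K_{a+k}^{b+k}(x,y) = c_{a,b,k}\, x^{-1}\, t^{k+a+1/2}\, (1-t^2)^{-1}\, F_k(t^2),
\]
with $|c_{a,b,k}| \le C\, k^{(b-a)/2}$ by Stirling. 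Since $x^{-1}(1-t^2)^{-1} \le |x-y|^{-1}$, estimate \eqref{eq:T1-size} reduces to the uniform bound $t^{k+a+1/2}\,|F_k(t^2)| \le C\, k^{(a-b)/2}$.

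To obtain this, I would exploit the parameter restriction $|a-b|\le 1$, which gives $C_0 - B_0 = 1 + (a-b)/2 \in [1/2, 3/2] > 0$, so Euler's integral representation is valid:
\[
F_k(z) = \frac{\Gamma(k+a+1)}{\Gamma(k+(a+b)/2)\,\Gamma(1+(a-b)/2)} \int_0^1 s^{k+(a+b)/2-1}(1-s)^{(a-b)/2}(1-zs)^{-(a-b)/2}\, ds.
\]
The weight $s^{k+(a+b)/2-1}$ concentrates near $s = 1$ on scale $1/k$, and a Laplace-type analysis (substituting $s = 1-u/k$) together with $\Gamma(k+a+1)/\Gamma(k+(a+b)/2) \sim k^{1+(a-b)/2}$ by Stirling yields the refined estimate
\[
|F_k(z)| \le C\, \bigl[\max(1-z,\, 1/k)\bigr]^{-(a-b)/2}.
\]
Multiplying by $t^{k+a+1/2}$ one distinguishes two regimes: when $1-t \le 1/k$ the max equals $1/k$ and $t^{k+a+1/2} \le 1$, giving the product $\le C k^{(a-b)/2}$; when $1-t > 1/k$ the rapid decay $t^{k+a+1/2} \le e^{-(k+a+1/2)(1-t)}$ dominates any polynomial factor, and a short calculation (setting $v = k(1-t) \ge 1$) again bounds the product by $C k^{(a-b)/2}$. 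This closes \eqref{eq:T1-size}.

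For \eqref{eq:T1-smooth}, differentiate the factored form of $K_{a+k}^{b+k}$ using $\partial_x t = -t/x$, $\partial_y t = 1/x$, together with the contiguous relation
\[
F_k'(z) = \frac{((a-b)/2)(k+(a+b)/2)}{k+a+1}\, {}_2F_1\!\left(\tfrac{a-b}{2}+1,\, k+\tfrac{a+b}{2}+1;\, k+a+2;\, z\right).
\]
Each derivative of $x^{-1}$, $t^{k+a+1/2}$, or $(1-t^2)^{-1}$ contributes one extra factor bounded by $|x-y|^{-1}$; the new hypergeometric has $C-A-B = 0$, giving at worst a logarithmic singularity at $z = 1$ that is controlled by the same Euler-integral analysis (and the ratio $AB/C$ is uniformly bounded since $|a-b| \le 1$). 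The case $0 < x < y$ is handled symmetrically by interchanging $(a,x)$ and $(b,y)$. The main obstacle is precisely the sharp uniform bound on $F_k$ near $z = 1$ when $a < b$: the prefactor $c_{a,b,k}$ grows polynomially in $k$, so the required cancellation must come entirely from the Euler integral, and the naive bound $|F_k| \le 1$ is not enough; one needs the refined $[\max(1-z, 1/k)]$-type estimate obtained by carefully tracking both the large-$k$ concentration and the behavior as $z \to 1^-$.
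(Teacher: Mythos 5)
Your route is genuinely different from the paper's in its organization, and for the size estimate \eqref{eq:T1-size} it is essentially sound. The paper never performs the Euler (Pfaff) transformation: it applies the Euler integral representation directly to ${}_2F_1\bigl(\tfrac{\alpha+\beta+2}{2},\tfrac{\alpha-\beta+2}{2};\alpha+1;(y/x)^2\bigr)$ with the \emph{small} parameter $\tfrac{a-b+2}{2}$ in the $s^{q-1}$ slot, so that the whole kernel becomes a single integral $\int_0^1 s^{(a-b)/2}(1-s)^{(a+b)/2+k-1}(x^2-y^2s)^{-((a+b)/2+k+1)}\,ds$, and then all of the uniform-in-$k$ bookkeeping is funneled through one change-of-variables lemma giving a gain $d^{-\lambda}\sim k^{-\lambda}$. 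You instead peel off the full singularity $(1-z)^{-1}$ first, put the \emph{large} parameter $k+\tfrac{a+b}{2}$ in the $s^{q-1}$ slot of the Euler integral for the residual factor $F_k$, and compensate the growth of the prefactor $c_{a,b,k}\sim k^{(b-a)/2}$ by a two-regime Laplace bound. Your key claim $|F_k(z)|\le C[\max(1-z,1/k)]^{-(a-b)/2}$ is correct, and you rightly identify it as the crux; but as written it is only asserted via a sketched concentration argument, whereas the paper's Lemma \ref{lem:estimate} is the analogous estimate proved in full. That part is a matter of filling in a computation, not a conceptual gap.

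The genuine gap is in your treatment of \eqref{eq:T1-smooth}. The assertion that ``each derivative of $x^{-1}$, $t^{k+a+1/2}$, or $(1-t^2)^{-1}$ contributes one extra factor bounded by $|x-y|^{-1}$'' is false for the middle factor: $\partial_x\, t^{k+a+1/2}=-(k+a+\tfrac12)\,t^{k+a+1/2}/x$, and $(k+a+\tfrac12)/x\le C/|x-y|$ only when $k(1-t)\le C$. In the regime $k(1-t)$ large the extra factor of $k$ must be absorbed by the exponential decay of $t^{k}$ (via $k(1-t)t^{k}\le k(1-t)e^{-k(1-t)}\le C$), i.e.\ you have to rerun the two-regime analysis rather than quote the size bound with one more power of $|x-y|^{-1}$. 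The same issue forces a quantitative version of your remark about $F_k'$: after the contiguous relation the new ${}_2F_1$ has $C-A-B=0$, so you need an explicit uniform bound of the type $\log\bigl(e/\max(1-z,1/k)\bigr)$ (or a refined power bound) near $z=1$, not just the observation that the coefficient $\tfrac{((a-b)/2)(k+(a+b)/2)}{k+a+1}$ is bounded. For comparison, the paper meets exactly this difficulty head-on: differentiating its integral form produces an explicit factor of order $k^2$ (the terms $I_1$, $I_2$), which is then cancelled by applying Lemma \ref{lem:estimate} with $\lambda=2$ to gain $d^{-2}\sim k^{-2}$. Your proof is repairable along the same lines, but as it stands the derivative estimate does not go through.
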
   

The proof of this proposition is a consequence of the next lemma which is a variant of \cite[Lemma 5.3]{Ciau-Ron-Lag}.

\begin{lem}
\label{lem:estimate}
Let $c\ge -1/2$, $d,\lambda>0$, $\gamma>-1$, and $0<B<A$. Then
\[
\int_{0}^{1}\frac{s^\gamma (1-s)^{d+c-1/2}}{(A-Bs)^{d+c+\lambda+1/2}}\, ds\le \frac{C_{\gamma,\lambda}}{d^\lambda}\frac{1}{A^{c+1/2}B^d(A-B)^\lambda},
\]
where the constant $C_{\gamma,\lambda}$ depend on $\gamma$ and $\lambda$ but not on $c$ and $d$.
\end{lem}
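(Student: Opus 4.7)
My plan is to reduce the integral to an incomplete Beta--type form via a single change of variables, and then estimate it using the Euler integral representation of ${}_2F_1$ together with standard Gamma-function asymptotics. I would use the decreasing diffeomorphism $w = B(1-s)/(A-Bs)$ from $(0,1)$ onto $(0,r)$, where $r = B/A \in (0,1)$. A direct computation gives $s = (B-wA)/(B(1-w))$, $1-s = w(A-B)/(B(1-w))$, $A-Bs = (A-B)/(1-w)$, and $ds = -(A-B)/(B(1-w)^2)\,dw$; collecting powers of $B$, $A-B$ and $1-w$ yields
\[
\int_0^1 \frac{s^\gamma(1-s)^{d+c-1/2}}{(A-Bs)^{d+c+\lambda+1/2}}\,ds = \frac{1}{B^{d+c+1/2}(A-B)^\lambda}\int_0^r (1-w/r)^\gamma w^{d+c-1/2}(1-w)^{\lambda-\gamma-1}\,dw.
\]

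Comparing with the target bound, what remains is to show the inner integral is at most $C_{\gamma,\lambda}\,r^{c+1/2}/d^\lambda$. Since $c+1/2 \ge 0$, the pointwise estimate $w^{c+1/2} \le r^{c+1/2}$ on $[0,r]$ extracts the factor $r^{c+1/2}$ cleanly, reducing matters to
\[
\int_0^r (1-w/r)^\gamma w^{d-1}(1-w)^{\lambda-\gamma-1}\,dw \le \frac{C_{\gamma,\lambda}}{d^\lambda}.
\]
After rescaling by $w = r\tau$ this reads $r^d\int_0^1 (1-\tau)^\gamma \tau^{d-1}(1-r\tau)^{\lambda-\gamma-1}\,d\tau \le C_{\gamma,\lambda}/d^\lambda$. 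The $\tau$-integral is an Euler integral, equal to $B(d,\gamma+1)\,{}_2F_1(1+\gamma-\lambda,\,d;\, d+\gamma+1;\, r)$; by Euler's transformation it rewrites as $(1-r)^\lambda B(d,\gamma+1)\,{}_2F_1(d+\lambda,\,\gamma+1;\, d+\gamma+1;\, r)$.

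I would then split by the size of $r$. For $r \le 1/2$, the factor $(1-r\tau)^{\lambda-\gamma-1}$ is uniformly bounded in $\gamma,\lambda$, the $\tau$-integral reduces essentially to $B(d,\gamma+1)$, and the exponentially small prefactor $r^d \le 2^{-d}$ absorbs any growth in $d$ uniformly. For $r$ close to $1$, the Gauss connection formula shows that $(1-r)^\lambda\,{}_2F_1(d+\lambda,\gamma+1;d+\gamma+1;r)$ stays bounded as $r\to 1^-$, with asymptotic value a ratio of Gamma functions to which Stirling applies, reducing the expression to essentially $r^d B(d,\lambda) \le C_\lambda/d^\lambda$. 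The main obstacle is the intricate balance near $r=1$, where $(1-r)^\lambda$ must cancel the $(1-r)^{-\lambda}$ blow-up of the hypergeometric function, and where all constants must be shown to depend only on $\gamma$ and $\lambda$, not on $c$ or $d$.
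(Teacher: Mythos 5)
Your change of variables and the extraction of the factor $r^{c+1/2}$ are correct, and they are in fact the paper's first two steps in disguise: your $w$ equals $z/(1+z)$ for the paper's substitution $1-s=(A-B)z/B$, and the bound $w^{c+1/2}\le r^{c+1/2}$ is exactly the paper's inequality $\bigl((1-s)/(A-Bs)\bigr)^{c+1/2}\le A^{-(c+1/2)}$. The genuine gap is the last step. The estimate you still need,
\[
r^d(1-r)^{\lambda}\,B(d,\gamma+1)\,{}_2F_1(d+\lambda,\gamma+1;d+\gamma+1;r)\le C_{\gamma,\lambda}\,d^{-\lambda}
\quad\text{uniformly in } r\in(0,1)\text{ and } d,
\]
carries essentially the whole content of the lemma, and your argument for it does not close. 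First, the regimes ``$r\le 1/2$'' and ``$r$ close to $1$'' do not cover $r\in(1/2,1)$ with uniform constants. Second, the connection formula only yields the limit of $(1-r)^{\lambda}{}_2F_1(d+\lambda,\gamma+1;d+\gamma+1;r)$ as $r\to1^-$ for each fixed $d$; its two terms are not individually bounded uniformly in $d$ (the coefficient of the regular term grows like $d^{\gamma+1}$, and ${}_2F_1(d+\lambda,\gamma+1;1+\lambda;1-r)$ grows with $d$ for fixed $r<1$), so the ``intricate balance'' you flag is precisely the unproved point. Third, in the actual application $\lambda\in\{1,2\}$, where the connection formula degenerates into its logarithmic form, so the identity you would invoke does not even apply as stated.

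The good news is that your own reduction closes elementarily, which is what the paper does. Since $(1-w/r)/(1-w)=s\in(0,1)$, for $\gamma\ge0$ you have $(1-w/r)^{\gamma}(1-w)^{\lambda-\gamma-1}=s^{\gamma}(1-w)^{\lambda-1}\le(1-w)^{\lambda-1}$, hence
\[
\int_0^r(1-w/r)^{\gamma}w^{d-1}(1-w)^{\lambda-\gamma-1}\,dw\le\int_0^1 w^{d-1}(1-w)^{\lambda-1}\,dw=\frac{\Gamma(d)\Gamma(\lambda)}{\Gamma(d+\lambda)}\le \frac{C_{\lambda}}{d^{\lambda}},
\]
with no hypergeometric analysis. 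For $-1<\gamma<0$ the factor $s^{\gamma}$ is unbounded near $s=0$ (i.e.\ near $w=r$), and the paper deals with it by splitting at $s=1/2$ and applying H\"older's inequality with an exponent $p$ satisfying $p\gamma>-1$ on the singular piece; some such device is needed and is absent from your proposal. (A caveat shared with the paper, not specific to you: the bound $\Gamma(d)\Gamma(\lambda)/\Gamma(d+\lambda)\le C_{\lambda}d^{-\lambda}$ fails as $d\to0^{+}$ when $\lambda<1$, so both arguments implicitly require $\lambda\ge1$ or $d$ bounded below, which does hold in the application.)
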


This lemma will be proved in the last section.

\begin{proof}[Proof of Proposition \ref{prop:C-Z}]
The starting point of the proof is the integral representation \cite[16.6.1]{NIST}
\begin{equation}
\label{eq:int-2F1}
{}_2F_1(p,q;r;z)=\frac{\Gamma(r)}{\Gamma(q)\,\Gamma(r-q)}\int_{0}^{1}\frac{s^{q-1}(1-s)^{r-q-1}}{(1-zs)^p}\, ds,
\end{equation}
where $r>q>0$ and $0<z<1$.

By using \eqref{eq:int-2F1}, for $0<y<x$ we have
\[
K_{a+k}^{b+k}(x,y)=\frac{(a+b)/2+k}{\Gamma((b-a)/2)\, \Gamma((a-b+2)/2)}x^{b+k+1/2}y^{a+k+1/2}\int_{0}^{1}\frac{s^{(a-b)/2}(1-s)^{(a+b)/2+k-1}}{(x^2-y^2s)^{(a+b)/2+k+1}}\, ds.
\]
Then, taking $\gamma=(a-b)/2$, $c=b/2+k/2-1/4$, $d=a/2+k/2-1/4$, and $\lambda=1$ in Lemma \ref{lem:estimate}, we deduce that
\[
|K_{a+k}^{b+k}(x,y)|\le C\frac{a+b+2k}{a+k-1/2}\frac{x}{x^2-y^2}\le \frac{C_1}{x-y}, \qquad 0<y<x,
\]
where $C_1$ depends on $a$ and $b$ but not on $k$. Note that our assumptions on $a$, $b$, and $k$ allow us to apply \eqref{eq:int-2F1} and Lemma \ref{lem:estimate} without any problem. To analyze the case $0<x<y$ we proceed in a similar way and the proof of \eqref{eq:T1-size} is completed.

To prove \eqref{eq:T1-smooth}, we check that 
\begin{equation}
\label{eq:T1-smooth-2}
\left|\frac{\partial}{\partial x}K_{a+k}^{b+k}(x,y)\right|\le\frac{C_2}{(x-y)^2}, \qquad 0<y<x,
\end{equation}
only because the other cases can be obtained in a similar way.

From the identity \cite[15.5.1]{NIST}
\[
\frac{d}{dz}{}_2F_1(p,q;r;z)=\frac{pq}{r}{}_2F_1(p+1,q+1;r+1;z)
\]
and \eqref{eq:int-2F1}
we have
\[
\frac{\partial}{\partial x}K_{a+k}^{b+k}(x,y)=-\frac{1}{\Gamma((b-a)/2)\, \Gamma((a-b+2)/2)}(I_1+I_2),
\]
where
\[
I_1=\left(\frac{a+b}{2}+k\right)(2a+2k+3)x^{b+k-1/2}y^{a+k+1/2}\int_{0}^{1}\frac{s^{(a-b)/2}(1-s)^{(a+b)/2+k-1}}
{(x^2-y^2 s)^{(a+b)/2+k+1}}\, ds
\]
and
\[
I_2=\left(\frac{a+b}{2}+k\right)\left(\frac{a+b}{2}+k+1\right)x^{b+k-1/2}y^{a+k+5/2}
\int_{0}^{1}\frac{s^{(a-b+2)/2}(1-s)^{(a+b)/2+k-1}}{(x^2-y^2 s)^{(a+b)/2+k+2}}\, ds.
\]
Using that
\[
I_1\le C k^2x^{b+k-1/2}y^{a+k+1/2}\int_{0}^{1}\frac{s^{(a-b)/2}(1-s)^{(a+b)/2+k-2}}
{(x^2-y^2 s)^{(a+b)/2+k+1}}\, ds
\]
and applying Lemma \ref{lem:estimate} with $\gamma=(a-b)/2$, $c=b/2+k/2-7/4$, $d=a/2+k/2+1/4$, and $\lambda=2$, we have
\[
I_1\le \frac{C_2}{(x-y)^2}.
\]
Finally, taking $\gamma=(a-b+2)/2$, $c=b/2+k/2-7/4$, $d=a/2+k/2+5/4$, and $\lambda=2$ in Lemma \ref{lem:estimate} we conclude that
\[
I_2\le C \frac{((a+b)/2+k)((a+b)/2+k+1)}{(a/2+k/2+5/4)^2}\frac{x^2}{(x^2-y^2)^2}\le \frac{C_2}{(x-y)^2}
\]
and the proof of \eqref{eq:T1-smooth-2} is finished.
\end{proof}

\begin{proof}[Proof of Theorem \ref{thm:main1}]
Let us suppose that $b>a$ (the case $b<a$ can be treated in the same way) and we take $m=\lfloor b-a \rfloor$, where $\lfloor \cdot \rfloor$ denotes the function integer part. Then
\[
S_k^{a,b}f(x)=S_{k}^{a,a+1}\circ S_k^{a+1,a+2}\circ\cdots \circ S_{k}^{a+m-1,a+m}\circ S_k^{a+m,b}f(x). 
\]
From Proposition \ref{prop:C-Z}, $S_{k}^{a+j,a+j+1}$, with $j=0,\dots, m-1$, and $S_k^{a+m,b}$ are uniform Calder\'on-Zygmund operators and the proof of the result follows inmediately from the standard theory.
\end{proof}

\section{Proof of Theorem \ref{thm:JL}}
Taking the mixed-norm spaces
\[
L^{p,2}(\mathbb{R}^n,u(|\cdot|))=\{f: \|f\|_{L^{p,2}(\mathbb{R}^n,u(|\cdot|))}<\infty\},
\]
where
\[
\|f\|_{L^{p,2}(\mathbb{R}^n,u(|\cdot|))}=\left(\int_{0}^{\infty}\left(\int_{\mathcal{S}^{n-1}}|f(r\theta)|^2\, d\sigma(\theta)
\right)^{p/2}r^{n-1}u(r)\, dr\right)^{1/p},
\]
we have the following transference result.
\begin{thm}
\label{thm:mixed-norm}
Let be $n\ge 2$, $n_d=(n+d-1)(1-p/2)$ and $u(r)r^{n_d}\in A_p(0,\infty)$. Then the inequality 
\[
\|T_m f\|_{L^{p,2}(\mathbb{R}^n,u(|\cdot|)|\cdot|^{d(1-p/2)})}\le \|f\|_{L^{p,2}(\mathbb{R}^n, u(|\cdot|)|\cdot|^{d(1-p/2)})} 
\]
implies 
\[
\|T_m f\|_{L^{p,2}(\mathbb{R}^{n+d}, u(|\cdot|))}\le \|f\|_{L^{p,2}(\mathbb{R}^{n+d}, u(|\cdot|))}, \qquad d=1,2,\dots . 
\]
\end{thm}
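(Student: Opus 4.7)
The plan is to diagonalise $T_m$ on $\mathbb{R}^{n+d}$ via the spherical harmonic expansion on $\mathcal{S}^{n+d-1}$ and to use the uniform weighted transplantation Theorem~\ref{thm:main1}, in the vector-valued form announced in the abstract, to shift the Hankel orders arising from $\mathbb{R}^{n+d}$ down by $d/2$ to those arising from $\mathbb{R}^n$. Writing
\[
F(r\omega)=\sum_{l\ge 0}\sum_{\nu\in\Lambda_l^{(n+d)}}F_{l,\nu}(r)\,Y^{(n+d)}_{l,\nu}(\omega),
\]
the Hecke--Bochner formula gives $T_m F=\sum_{l,\nu}(M_m^{(\beta_l)}F_{l,\nu})\,Y^{(n+d)}_{l,\nu}$, where $\beta_l:=l+(n+d-2)/2$ and $M_m^{(\alpha)}:=\mathcal{H}_\alpha\circ M_m\circ\mathcal{H}_\alpha$. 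Substituting $G_{l,\nu}(r):=r^{(n+d-1)/2}F_{l,\nu}(r)$ and applying Parseval on $\mathcal{S}^{n+d-1}$, the target inequality becomes
\[
\int_0^\infty \Bigl(\sum_{l,\nu}|M_m^{(\beta_l)}G_{l,\nu}(r)|^2\Bigr)^{p/2}w(r)\,dr \le C\int_0^\infty \Bigl(\sum_{l,\nu}|G_{l,\nu}(r)|^2\Bigr)^{p/2}w(r)\,dr,
\]
with $w(r):=u(r)\,r^{n_d}\in A_p(0,\infty)$. The weight $|\cdot|^{d(1-p/2)}$ appearing in the hypothesis is precisely what the same computation on $\mathbb{R}^n$ needs in order to produce the same $w$, and it recasts the hypothesis as the corresponding inequality with $M_m^{(\alpha_l)}$ in place of $M_m^{(\beta_l)}$ and the $\nu$-summation restricted to $\Lambda_l^{(n)}$, where $\alpha_l:=l+(n-2)/2$.

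Next I would use the identity $M_m^{(\beta_l)}=T_{\alpha_l}^{\beta_l}\circ M_m^{(\alpha_l)}\circ T_{\beta_l}^{\alpha_l}$ (an immediate consequence of $\mathcal{H}_\alpha^2=I$) and invoke the weighted vector-valued form of Theorem~\ref{thm:main1}, with parameters $a=(n-2)/2$, $b=(n+d-2)/2$ and $k=l$, once to dispose of the outer $T_{\alpha_l}^{\beta_l}$ and once to dispose of the inner $T_{\beta_l}^{\alpha_l}$. The hypothesis $u(r)\,r^{n_d}\in A_p(0,\infty)$ places $w$ exactly in the weight class required by that bound, and the resulting constant is independent of $l$. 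This reduces the proof to the estimate
\[
\int_0^\infty \Bigl(\sum_{l,\nu\in\Lambda_l^{(n+d)}}|M_m^{(\alpha_l)}H_{l,\nu}(r)|^2\Bigr)^{p/2}w(r)\,dr \le C\int_0^\infty \Bigl(\sum_{l,\nu\in\Lambda_l^{(n+d)}}|H_{l,\nu}(r)|^2\Bigr)^{p/2}w(r)\,dr
\]
for essentially arbitrary sequences $(H_{l,\nu})$, which is the same inequality as the one obtained from the hypothesis except that the $\nu$-summation now runs over the strictly larger set $\Lambda_l^{(n+d)}$.

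The main obstacle is bridging this multiplicity gap. Since $M_m^{(\alpha_l)}$ does not depend on $\nu$, the hypothesis yields, by testing on a single harmonic $Y^{(n)}_{l,\nu_0}$, a scalar bound $\|M_m^{(\alpha_l)}g\|_{L^p(w)}\le C\|g\|_{L^p(w)}$ uniform in $l$; a standard Khintchine randomization in $\nu$ at fixed $l$ promotes this to an arbitrary-multiplicity $\ell^2$-valued estimate in the $\nu$-direction. The remaining $\ell^2$-coupling across $l$-levels is more delicate, because the operators $M_m^{(\alpha_l)}$ are genuinely different for different $l$, so a single Khintchine argument does not apply; this step is exactly where the weighted vector-valued transplantation inequality of the paper is used decisively, to control the mixing between $l$-levels uniformly against the $A_p$-weight $w$.
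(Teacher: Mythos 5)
Your overall architecture is exactly the paper's: expand in spherical harmonics, pass to the weighted $\ell^2$-valued inequality for the Hankel multipliers $\mathcal{T}_m^{\ell}=\mathcal{H}_\ell M_m\mathcal{H}_\ell$ with weight $w(r)=u(r)r^{n_d}$, conjugate via $\mathcal{T}_m^{l+(n+d-2)/2}=S_l^{(n-2)/2,(n+d-2)/2}\,\mathcal{T}_m^{l+(n-2)/2}\,S_l^{(n+d-2)/2,(n-2)/2}$, and absorb the two transplantation factors with the weighted vector-valued inequality of Proposition \ref{prop:vector-trans}. Your bookkeeping of the weights ($r^{n+d-1}\cdot r^{-(n+d-1)p/2}\cdot u(r)=w(r)$ on both sides) is correct, and you deserve credit for flagging the multiplicity mismatch between $\Lambda_l^{(n)}$ and $\Lambda_l^{(n+d)}$, which the paper passes over in silence.

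The gap is in your final paragraph, where you resolve that mismatch. You first weaken the hypothesis to a uniform-in-$l$ \emph{scalar} bound $\|\mathcal{T}_m^{l+(n-2)/2}g\|_{L^p(w)}\le C\|g\|_{L^p(w)}$ and then try to recover the $\ell^2$-coupling across $l$-levels from Proposition \ref{prop:vector-trans}. That cannot work: the vector-valued transplantation inequality controls the family $\{S_l^{a,b}\}_l$, not the multipliers $\{\mathcal{T}_m^{l+(n-2)/2}\}_l$, and no general principle upgrades a uniform scalar bound for a family of \emph{distinct} operators to a square-function bound coupling them. The cross-level coupling must be taken from the hypothesis itself, not reconstructed afterwards: since $\dim\mathcal{A}_l\ge 1$ in dimension $n$ for every $l$, choosing one normalized spherical harmonic of each degree and testing $T_m$ on $f=\sum_l r^{-(n-1)/2}g_l(r)\mathcal{Y}^l(x/|x|)$ shows that the hypothesis already contains the full multiplicity-one inequality
\[
\left\|\Bigl(\sum_{l}|\mathcal{T}_m^{l+(n-2)/2}g_l|^2\Bigr)^{1/2}\right\|_{L^p(w)}\le C\left\|\Bigl(\sum_{l}|g_l|^2\Bigr)^{1/2}\right\|_{L^p(w)}
\]
for arbitrary sequences $(g_l)$. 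Only the inflation of the multiplicity \emph{within} each level remains, and there your Khintchine idea is the right one: applying it to $g_l^\omega=\sum_{\nu}\epsilon_\nu H_{l,\nu}$ with a single Rademacher sequence shared across levels, together with the Khintchine--Kahane equivalence in $\ell^2_l$, converts the displayed inequality into the arbitrary-multiplicity version needed on $\mathbb{R}^{n+d}$. With the roles of the two tools reassigned in this way your proof closes; as written, the step you call ``decisive'' asks Proposition \ref{prop:vector-trans} to do something it does not assert.
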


The proof of Theorem \ref{thm:JL} follows from the case $p=2$ in the previous theorem because 
\[
\|f\|_{L^{2,2}(\mathbb{R}^n, u(|\cdot|))}=\|f\|_{L^{2}(\mathbb{R}^n, u(|\cdot|))}.
\]
 To prove Theorem \ref{thm:mixed-norm} we will apply the next vector-valued inequality.

\begin{prop}
\label{prop:vector-trans}
  Let $a\not =b$, $a,b\ge -1/2$, $1<p<\infty$, and $u\in A_p(0,\infty)$. Then, 
\[
\left\|\left(\sum_{k=0}^\infty|S_k^{a,b} f_k|^2\right)^{1/2}\right\|_{L^p(u)}\le C\left\|\left(\sum_{k=0}^\infty|f_k|^2\right)^{1/2}\right\|_{L^p(u)},
\] 
where the constant $C$ depend on $a$ and $b$.
\end{prop}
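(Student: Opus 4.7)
My plan is to interpret the map $(f_k) \mapsto (S_k^{a,b} f_k)$ as a single Calder\'on-Zygmund operator taking values in the Banach space $\mathcal{B}(\ell^2)$ of bounded operators on $\ell^2$, and then to invoke the vector-valued weighted Calder\'on-Zygmund theory. The input data are exactly the objects already produced in Section 2.

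First I would reduce to the case $0<|a-b|\le 1$. As in the proof of Theorem \ref{thm:main1}, the identity $T_\alpha^\gamma = T_\beta^\gamma \circ T_\alpha^\beta$ (a direct consequence of $\mathcal{H}_\beta\circ \mathcal{H}_\beta = \mathrm{id}$) lets me write $S_k^{a,b}$ as a fixed composition of $\lfloor|a-b|\rfloor+1$ operators $S_k^{a_j,a_{j+1}}$ with $|a_{j+1}-a_j|\le 1$ and the same parameter $k$. Applying the unit-step vector-valued inequality once to $(f_k)$, again to $(S_k^{a_0,a_1}f_k)$, and so on, reduces the problem to this basic range of parameters.

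Next, fix the threshold $k_0\ge 10$ of Section 2 and split the sum into $k<k_0$ and $k\ge k_0$. The initial piece involves only $k_0$ scalar operators, each a Calder\'on-Zygmund operator on $L^p(u)$ by \cite{NoSt-Toh}, and is controlled via the crude estimates $(\sum_{k<k_0}|h_k|^2)^{1/2}\le \sum_{k<k_0}|h_k|$, the scalar weighted bound for each $S_k^{a,b}$, and $|f_k|\le (\sum_j|f_j|^2)^{1/2}$. For the tail, I would introduce the diagonal operator $\mathbf{T}$ on $\ell^2$-valued functions by $\mathbf{T}((f_k))(x)=(S_k^{a,b}f_k(x))_{k\ge k_0}$. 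Its integral kernel is the diagonal operator $\mathbf{K}(x,y)\in \mathcal{B}(\ell^2)$ with entries $K_{a+k}^{b+k}(x,y)$, so its operator norm on $\ell^2$ is simply $\sup_{k\ge k_0}|K_{a+k}^{b+k}(x,y)|$; Proposition \ref{prop:C-Z} then yields the uniform size and smoothness bounds $C_1/|x-y|$ and $C_2/(x-y)^2$. Moreover, because each scalar $S_k^{a,b}=\mathcal{H}_{b+k}\circ \mathcal{H}_{a+k}$ is an $L^2$ isometry, $\mathbf{T}$ is an isometry of $L^2((0,\infty);\ell^2)$, giving the $L^2$-boundedness input to Calder\'on-Zygmund theory with constant $1$.

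With both the $L^2$ bound and the standard kernel estimates in hand, the weighted vector-valued Calder\'on-Zygmund theorem for Banach-space-valued kernels yields boundedness of $\mathbf{T}$ from $L^p(u;\ell^2)$ into itself for every $u\in A_p(0,\infty)$, which is precisely the desired inequality on the tail. I do not expect the estimates themselves to be an obstacle, since they are all inherited from Proposition \ref{prop:C-Z} via the diagonal structure; the main point requiring care is the bookkeeping that converts the scalar off-diagonal representation recorded in Section 2 into an $\mathcal{B}(\ell^2)$-valued standard-kernel representation for $\mathbf{T}$, so that the general Banach-space-valued Calder\'on-Zygmund machinery applies cleanly.
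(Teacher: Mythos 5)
Your proposal is correct, but it takes a genuinely different route from the paper. The paper's own proof is two lines: for $p=2$ the $\ell^2$-valued inequality collapses, by integrating in $x$ and summing in $k$, to the scalar bound of Theorem~\ref{thm:main1} applied with a constant uniform in $k$ for every $u\in A_2(0,\infty)$; the case $p\neq 2$ is then obtained by the weighted extrapolation theorem for $\ell^2$-valued inequalities of Duoandikoetxea and Oruetxebarria \cite{Javi-Osane}, which upgrades an $L^2(u)$ estimate valid for all $u\in A_2$ (with constant controlled by the $A_2$ characteristic, as Calder\'on--Zygmund theory provides) to $L^p(u)$ for all $1<p<\infty$ and $u\in A_p$. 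You instead assemble the family $(S_k^{a,b})_{k\ge k_0}$ into a single diagonal operator with $\mathcal{B}(\ell^2)$-valued kernel and invoke the operator-valued weighted Calder\'on--Zygmund theorem, after the same reduction to $0<|a-b|\le 1$ and the same split at $k_0$ that the paper uses for the scalar Theorem~\ref{thm:main1}. Both arguments ultimately rest on exactly the same inputs --- the uniform kernel estimates of Proposition~\ref{prop:C-Z}, the $L^2$ isometry property, and the low-$k$ bound from \cite{NoSt-Toh} --- so your route is sound; what it costs is the bookkeeping you yourself flag (identifying $\sup_k|K_{a+k}^{b+k}(x,y)|$ as the $\mathcal{B}(\ell^2)$-norm of the diagonal kernel and justifying the off-diagonal Bochner-integral representation), and a citation to the Banach-space-valued weighted theory; what it buys is independence from the extrapolation machinery and a proof that is uniform in $p$ from the start. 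One further small point: the proposition is stated for $a,b\ge -1/2$ while Theorem~\ref{thm:main1} assumes $a,b>-1/2$, so the paper's reduction to the scalar theorem has a (harmless, for the intended application) endpoint mismatch that your direct argument, which only uses Proposition~\ref{prop:C-Z} and \cite{NoSt-Toh} in the range $a,b\ge -1/2$, actually avoids.
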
 

\begin{proof}
For $p=2$ the result follows from Theorem \ref{thm:main1} and for $p\not= 2$ is obtained by extrapolation as in \cite[Theorem 1.1]{Javi-Osane}.
\end{proof}

\begin{proof}[Proof of Theorem \ref{thm:mixed-norm}]
Remember that for suitable functions on $\mathbb{R}^n$
\[
f(x)=\sum_{\begin{smallmatrix}
             k\ge 0\\
             1\le j \le d_k  
           \end{smallmatrix}} f_{k,j}(|x|)\mathcal{Y}_j^{k}\left(\frac{x}{|x|}\right),
\]
where $\{\mathcal{Y}_j^k\}_{k\ge 0, 1\le j\le  d_k}$ is an orthonormal basis of spherical harmonics in $L^2(\mathbb{S}^{n-1})$ and
\[
f_{k,j}(r)=\int_{\mathbb{S}^{n-1}}f(r\theta)\mathcal{Y}_j^k(\theta)\, d\sigma(\theta).
\]
Each $\mathcal{Y}_j^k$ is the restriction to $\mathbb{S}^{n-1}$ of an element of $\mathcal{A}_k$, the class of homogeneous harmonic polynomials of degree $k$. The dimension of $\mathcal{A}_k$ is the integer $d_k$. 

In this way, see \cite[Ch. 4]{SW},
\[
\hat{f}(\xi)=\frac{1}{|\xi|^{(n-1)/2}}\sum_{\begin{smallmatrix}
             k\ge 0\\
             1\le j \le d_k  
           \end{smallmatrix}} i^{-k} \mathcal{H}_{k+(n-2)/2}(f_{k,j}(\cdot)s^{(n-1)/2})(|\xi|)\mathcal{Y}_j^{k}\left(\frac{\xi}{|\xi|}\right)
\]
and
\[
T_mf(x)=\frac{1}{|x|^{(n-1)/2}}\sum_{\begin{smallmatrix}
             k\ge 0\\
             1\le j \le d_k  
           \end{smallmatrix}} \mathcal{T}_m^{k+(n-2)/2}(f_{k,j}(\cdot)s^{(n-1)/2})(|x|)\mathcal{Y}_j^{k}\left(\frac{x}{|x|}\right),
\]
with
\[
\mathcal{H}_\ell(\mathcal{T}_m^{\ell}f)(s)=m(s)\mathcal{H}_\ell f(s). 
\]
Moreover, the inequality
\[
\|T_mf\|_{L^{p,2}(\mathbb{R}^{n+d},u(|\cdot|))}\le C\|f\|_{L^{p,2}(\mathbb{R}^{n+d},u(|\cdot|))}
\]
is equivalent to
\[
\left\|\left(\sum_{\begin{smallmatrix}
             k\ge 0\\
             1\le j \le d_k  
           \end{smallmatrix}}
           |\mathcal{T}_{m}^{k+(n+d-2)/2}f_{k,j}|^2\right)^{1/2}\right\|_{L^p(u(r)r^{n_d})}
\le C\left\|\left(\sum_{\begin{smallmatrix}
             k\ge 0\\
             1\le j \le d_k  
           \end{smallmatrix}}|f_{k,j}|^2\right)^{1/2}\right\|_{L^p(u(r)r^{n_d})}.
\]

Now, by using the identity
\[
\mathcal{T}_m^{k+(n+d-2)/2}f=S_k^{(n+d-2)/2,(n-2)/2}\mathcal{T}_m^{k+(n-2)/2}S_k^{(n-2)/2,(n+d-2)/2}f
\]
and Proposition \ref{prop:vector-trans}, the proof follows immediately because the inequality
\[
\|T_m f\|_{L^{p,2}(\mathbb{R}^n,u(|\cdot|)|\cdot|^{d(1-p/2)})}\le \|f\|_{L^{p,2}(\mathbb{R}^n, u(|\cdot|)|\cdot|^{d(1-p/2)})} 
\]
can be written as
\[
\left\|\left(\sum_{\begin{smallmatrix}
             k\ge 0\\
             1\le j \le d_k  
           \end{smallmatrix}}
           |\mathcal{T}_{m}^{k+(n-2)/2}f_{k,j}|^2\right)^{1/2}\right\|_{L^p(u(r)r^{n_d})}
\le C\left\|\left(\sum_{\begin{smallmatrix}
             k\ge 0\\
             1\le j \le d_k  
           \end{smallmatrix}}|f_{k,j}|^2\right)^{1/2}\right\|_{L^p(u(r)r^{n_d})}.
\]
\end{proof}

\section{Proof of Lemma \ref{lem:estimate}}
This lemma is an extension of \cite[Lemma 5.3]{Ciau-Ron-Lag} including the powers $s^\gamma$. For $\gamma\ge 0$ the proof can be obtained from \cite[Lemma 5.3]{Ciau-Ron-Lag}, but we include it for sake of completeness. In the case $-1<\gamma<0$ some details have to be considered.

From the inequality
\[
\left(\frac{1-s}{A-Bs}\right)^{c+1/2}\le \frac{1}{A^{c+1/2}},
\]
we have to prove that
\begin{equation}
\label{eq:lem-aux}
\int_{0}^{1}\frac{s^\gamma (1-s)^{d-1}}{(A-Bs)^{d+\lambda}}\, ds\le\frac{C_{\gamma,\lambda}}{d^\lambda}\frac{1}{B^d(A-B)^\lambda}.
\end{equation}
With the change of variable $1-s=(A-B)z/B$, we obtain that
\begin{align}
\int_{0}^{1}\frac{ (1-s)^{d-1}}{(A-Bs)^{d+\lambda}}\, ds&=\frac{1}{B^d(A-B)^\lambda}\int_{0}^{B/(A-B)}\frac{z^{d-1}}{(1+z)^{d+\lambda}}\, dz\notag\\ &\le \frac{\Gamma(d)\, \Gamma(\lambda)}{\Gamma(d+\lambda)}\frac{1}{B^d(A-B)^\lambda}\le \frac{C_\lambda}{d^\lambda}\frac{1}{B^d(A-B)^\lambda}\label{eq:lem-aux-2},
\end{align}
where we have applied that $\Gamma(d)/\Gamma(d+\lambda)\sim d^{-\lambda}$.

For $\gamma\ge 0$, using that
\[
\int_{0}^{1}\frac{s^\gamma (1-s)^{d-1}}{(A-Bs)^{d+\lambda}}\, ds\le \int_{0}^{1}\frac{ (1-s)^{d-1}}{(A-Bs)^{d+\lambda}}\, ds
\]
and \eqref{eq:lem-aux-2}, the proof of \eqref{eq:lem-aux} is clear.

For $-1<\gamma<0$, we consider the decomposition
\[
\int_{0}^{1}\frac{s^\gamma (1-s)^{d-1}}{(A-Bs)^{d+\lambda}}\, ds\le \int_{0}^{1/2}\frac{s^\gamma (1-s)^{d-1}}{(A-Bs)^{d+\lambda}}\, ds+\int_{1/2}^{1}\frac{s^\gamma (1-s)^{d-1}}{(A-Bs)^{d+\lambda}}\, ds.
\]
From the inequality
\[
\int_{1/2}^{1}\frac{s^\gamma (1-s)^{d-1}}{(A-Bs)^{d+\lambda}}\, ds\le \int_{0}^{1}\frac{ (1-s)^{d-1}}{(A-Bs)^{d+\lambda}}\, ds,
\]
using \eqref{eq:lem-aux-2}, we obtain the required estimate for the second integral. 
To estimate the first integral we consider some value $1<p<\infty$ such that $\gamma p>-1$ and apply H\"older inequality. Indeed, by \eqref{eq:lem-aux-2},
\begin{align*}
\int_{0}^{1/2}\frac{s^\gamma (1-s)^{d-1}}{(A-Bs)^{d+\lambda}}\, ds&\le \left(\int_{0}^{1/2}\frac{s^{p\gamma }}{1-s}\,ds\right)^{1/p}\left(\int_{0}^{1/2}\frac{(1-s)^{qd-1}}{(A-Bs)^{q(d+\lambda)}}\right)^{1/q}\\&\le C_\gamma\left(\frac{C_\lambda}{d^{q\lambda}B^{qd}(A-B)^{q\lambda}}\right)^{1/q}\le \frac{C_{\gamma,\lambda}}{d^\lambda}\frac{1}{B^d(A-B)^\lambda}
\end{align*}
and the proof of \eqref{eq:lem-aux} is completed.


\end{document}